\newtheorem{theorem}{Theorem}[section]
\newtheorem{definition}[theorem]{Definition}
\newtheorem{conjecture}[theorem]{Conjecture}
\newtheorem{proposition}[theorem]{Proposition}
\newtheorem{fact}[theorem]{Fact}
\title{\textbf{An Effective Method for Solving a Class of Transcendental Diophantine Equations}}
\author{Zeyu Cai}
\date{} 
\begin{document}

\maketitle

\begin{abstract}
This paper investigates the exponential Diophantine equation of the form $a^x+b=c^y$, where $a, b, c$ are given positive integers with $a,c \ge 2$, and $x,y$ are positive integer unknowns. We define this form as a "Type-I transcendental diophantine equation." A general solution to this problem remains an open question; however, the ABC conjecture implies that the number of solutions for any such equation is finite.

This work introduces and implements an effective algorithm designed to solve these equations. The method first computes a strict upper bound for potential solutions given the parameters $(a, b, c)$ and then identifies all solutions via finite enumeration. While the universal termination of this algorithm is not theoretically guaranteed, its heuristic-based design has proven effective and reliable in large-scale numerical experiments. Crucially, for each instance it successfully solves, the algorithm is capable of generating a rigorous mathematical proof of the solution's completeness.
\end{abstract}

\section{Introduction}
\label{sec:introduction}

Diophantine equations are a central area of research in number theory. When the unknowns in an equation appear in the exponent, it constitutes a transcendental diophantine equation. This paper studies a special class of such equations, whose form and properties are given by the following definition.

\begin{definition}[Type-I Transcendental Diophantine Equation]
\label{def:main_equation}
Given positive integers $a, b, c$ satisfying $a \ge 2$ and $c \ge 2$, the problem of solving the equation
\begin{equation}
    a^x+b=c^y
    \label{eq:main}
\end{equation}
in the domain of positive integers $(x, y) \in \mathbb{N^*}^2$ is referred to as a \textbf{Type-I transcendental diophantine equation}.
\end{definition}

The difficulty of solving equation \eqref{eq:main} differs significantly depending on whether the parameters $a,b,c$ are pairwise coprime. When $a,b,c$ are not pairwise coprime, the analysis of the equation is relatively direct; when they are pairwise coprime, the problem becomes non-trivial. As a concrete example, consider the equation $5^x+3=2^y$. It is not difficult to find that it has two solutions: $(x, y)=(1, 3)$ gives $5^1+3=8=2^3$, and $(x, y)=(3, 7)$ gives $5^3+3=128=2^7$. A natural question arises: do any other solutions exist for this equation?

The exploration of this problem is closely related to the profound ABC conjecture in number theory, independently proposed by Masser and Oesterlé in the mid-1980s~\cite{SB_1987-1988__30__165_0}. This conjecture has far-reaching implications, and the proof attempt proposed by Mochizuki in recent years has drawn widespread attention~\cite{mochizuki2021inter}.

\begin{conjecture}[The ABC Conjecture]
For any three coprime positive integers $A, B, C$ satisfying $A+B=C$, for any real number $\epsilon > 0$, there exists a constant $K_\epsilon$ depending only on $\epsilon$ such that $C < K_\epsilon (\text{rad}(ABC))^{1+\epsilon}$, where $\text{rad}(n)$ is the product of the distinct prime factors of $n$.
\end{conjecture}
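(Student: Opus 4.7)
The stated proposition is the ABC Conjecture itself, which remains one of the central open problems of modern number theory; no complete, community-accepted proof exists at the time of writing. My proposal is therefore not a genuine sketch toward a proof but rather a roadmap of the principal strategies that have been pursued, together with the obstacles each has encountered. I state up front that the conjecture will be used elsewhere in this paper as a hypothesis, not as a theorem to be established here.

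The most ambitious attempt is due to Mochizuki, whose \emph{Inter-universal Teichm\"uller theory} (IUT) was developed over more than a decade and claims a strengthened form of ABC as a consequence. The general plan in IUT is to reinterpret the additive and multiplicative structures on rings as separate but linked \emph{Frobenioids}, and then to track the deformation between these two structures via anabelian reconstructions of number fields from their \'etale fundamental groups; if the resulting height comparison can be pushed through, the required bound of $C$ in terms of $\text{rad}(ABC)$ drops out. The hard part here is not formal but substantive: it is the disputed \emph{Corollary 3.12} of the third IUT paper, whose key inequality has been challenged by Scholze--Stix and defended by Mochizuki without consensus having been reached.

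An alternative route worth serious consideration proceeds via elliptic curves. By Frey's construction, any triple $(A,B,C)$ as in the statement determines a Frey curve $y^2 = x(x-A)(x+B)$, and bounds on its Faltings height or Szpiro ratio translate directly into ABC-style inequalities. The plan would be to establish Szpiro's conjecture $|\Delta_E| \ll N_E^{6+\epsilon}$ through modularity together with analytic estimates on $L$-functions, and then invoke its known equivalence with ABC up to constants. A related tool is Belyi's theorem, which Elkies used to show that effective ABC implies effective Faltings; the hope would be to run such a bridge in the reverse direction, using the geometry of covers of $\mathbb{P}^1\setminus\{0,1,\infty\}$ to control ramification.

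The fundamental obstacle, shared by every known approach, is that ABC asserts an essentially \emph{transcendental} bound between the additive quantity $C = A+B$ and the multiplicative quantity $\text{rad}(ABC)$, with no obvious algebraic mechanism forcing the inequality. All current techniques eventually require either an unproven equidistribution statement, an anabelian rigidity claim whose precise formulation is disputed, or an analytic input (such as subconvexity for certain automorphic $L$-functions) that is itself out of reach. I do not expect these obstructions to be resolved within the scope of the present paper, and the subsequent sections accordingly invoke the ABC conjecture only as a hypothesis used to guarantee the finiteness of the solution set of \eqref{eq:main}.
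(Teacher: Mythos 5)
You are right that this statement is the ABC Conjecture itself, which the paper states without proof and invokes only as a hypothesis to deduce finiteness of the solution set of \eqref{eq:main}; your treatment (declining to prove it and flagging its conditional use) matches the paper's exactly. Your survey of Mochizuki's IUT, the Frey--Szpiro route, and the additive--multiplicative obstruction is accurate background but is not required, since no proof is expected here.
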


If the ABC conjecture is true, it can be proven that equation \eqref{eq:main} has only a finite number of solutions under the condition that $a,b,c$ are pairwise coprime. The argument is as follows: let $A=a^x, B=b, C=c^y$. Applying the conjecture gives $c^y < K_\epsilon (\text{rad}(a^x b c^y))^{1+\epsilon} = K_\epsilon (\text{rad}(abc))^{1+\epsilon}$. Since the right-hand side is a constant, this inequality establishes an upper bound for $y$. By rearranging to consider $a^x = c^y - b$, an upper bound for $x$ can be similarly obtained, thus the number of solutions must be finite.

Although the study of the general form of Type-I transcendental diophantine equations is difficult, for specific parameters $(a,b,c)$, we can often construct a complete proof. This type of constructive proof leads to the concept of the "effective algorithm" central to this paper. While the universal termination of this algorithm is a conjecture, in the context of individual instances, it always generates a rigorous and verifiable proof. Hereafter, using the equation $5^x+3=2^y$ as an example, we demonstrate the proof output by this algorithm for one instance.

\begin{proposition}
\[ 5^x+3=2^y, \quad x, y \in \mathbb{N}^* \Rightarrow (x,y)=(1,3) \text{ or } (x,y)=(3,7) \]
\end{proposition}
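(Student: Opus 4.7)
The plan is to combine a $2$-adic analysis of $5^x+3$ via Lifting-the-Exponent (LTE) with a growth argument to force all solutions into a finite, verifiable list.

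First, I reduce modulo small powers of $2$. If $x$ is even, $5^x\equiv 1\pmod 8$, giving $v_2(5^x+3)=2$, which forces $y=2$ and the impossibility $5^x=1$. So $x$ is odd. If $x\equiv 1\pmod 4$, then $5^x\equiv 5\pmod{16}$ gives $v_2(5^x+3)=3$, hence $y=3$ and $x=1$, yielding the first solution. For $x\equiv 3\pmod 4$, write $x=3+4j$ and apply LTE: $v_2(5^{4j}-1)=4+v_2(j)$ for $j\ge 1$. Since $5^3+3=128=2^7$, the identity $5^x+3=125(5^{4j}-1)+2^7$ gives $v_2(5^x+3)=\min(4+v_2(j),7)$ if $4+v_2(j)\ne 7$, and $v_2(5^x+3)\ge 8$ if $v_2(j)=3$. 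Branching on $v_2(j)$ kills most sub-cases: $j=0$ yields the second solution $(3,7)$; $v_2(j)\in\{0,1,2\}$ forces $y\in\{4,5,6\}$, but $j\ge 1$ makes $5^x\ge 5^7>2^6$, contradiction; $v_2(j)\ge 4$ forces $y=7$ and hence $5^x=125$, contradicting $j\ge 16$. Only the residual case $v_2(j)=3$, i.e., $x=3+32b$ with $b$ odd, remains.

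To close this residual case, I would iterate the $2$-adic lifting. Since the subgroup $\langle 5\rangle\subseteq(\mathbb{Z}/2^y)^{\ast}$ is exactly the residues $\equiv 1\pmod 4$, and $-3\equiv 1\pmod 4$, the congruence $5^x\equiv -3\pmod{2^y}$ has a unique solution $x_0(y)\in[0,2^{y-2})$, and any true solution must satisfy $x=x_0(y)$. The sequence $(x_0(y))_{y\ge 3}$ is given by a simple recursion: $x_0(y+1)=x_0(y)$ whenever $v_2(5^{x_0(y)}+3)\ge y+1$, and otherwise $x_0(y+1)=x_0(y)+2^{y-2}$. The initial values are $x_0(3)=1$, $x_0(4)=\cdots=x_0(7)=3$, $x_0(8)=x_0(9)=35$, $x_0(10)=163$, and so on. A genuine solution at exponent $y$ exists iff $5^{x_0(y)}+3=2^y$ exactly, and this holds only at $y=3$ and $y=7$.

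The main obstacle is certifying termination: showing that no $y\ge 8$ yields a new solution, equivalently that $5^{x_0(y)}>2^y$ for all such $y$. Heuristically this is clear because $x_0$ grows rapidly (by $2^{y-2}$ at each jump) while $y$ advances only linearly, so the inequality becomes and remains strict after the second jump. Making this fully rigorous may require a Baker-type lower bound on $|x\log 5-y\log 2|$, or, in the spirit of the paper's effective algorithm, an auxiliary modular covering by small odd primes (e.g.\ $5,11,13,41$) that eliminates every residue class of $b$ in $x=3+32b$ below a certified bound.
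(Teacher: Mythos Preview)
Your $2$-adic reduction is correct and, in fact, reaches exactly the constraint the paper derives: your residual case $x=3+32b$ with $b$ odd is precisely $x\equiv 35\pmod{64}$, which is what the paper obtains by working modulo $2^8$. The issue is that you stop there. You explicitly acknowledge that the termination step---ruling out all $y\ge 8$---is left open, and the growth heuristic you sketch does not close it on its own: knowing that $x_0(y)$ jumps by $2^{y-2}$ does not bound how long $x_0$ can stagnate between jumps, and the stagnation length at level $y$ is $v_2\big(5^{x_0(y)}+3\big)-y$, which is exactly the quantity you are trying to control. So the argument as written is circular without either an explicit finite computation of several further jump points together with a careful inequality at each stage, a Baker-type linear-forms bound, or the modular covering you mention at the end but do not carry out.

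The paper fills this gap with a single, clean step that you came close to suggesting. Having reduced to $x\equiv 35\pmod{64}$, it splits into the four residues $x\equiv 35,99,163,227\pmod{256}$ and evaluates $5^x+3$ modulo the prime $257$ (note $\mathrm{ord}_{257}(5)=256$), obtaining $17,227,246,36$ respectively; a direct check shows none of these lies in the cyclic subgroup $\{2^k\bmod 257\}$, so $2^y\equiv 5^x+3\pmod{257}$ is impossible in every sub-case. This is exactly the ``auxiliary modular covering by small odd primes'' you propose, executed with one well-chosen prime from the progression $256n+1$. Adding this single verification (or an equivalent one) would complete your proof; as submitted, the residual case remains open.
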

\begin{proof}
By checking small positive integer solutions, we confirm that $(1, 3)$ and $(3, 7)$ are two solutions to the equation. To prove that no other solutions exist, we assume there is a solution $(x, y)$ satisfying $y \ge 8$.
Consider the equation modulo $2^8=256$. When $y \ge 8$, $2^y \equiv 0 \pmod{256}$. The original equation becomes:
$$ 5^x+3 \equiv 0 \pmod{256} \implies 5^x \equiv 253 \pmod{256} $$
Solving this exponential congruence equation, we get $x \equiv 35 \pmod{64}$. This means $x$ must satisfy one of the following four cases: $x \equiv 35 \pmod{256}$, $x \equiv 99 \pmod{256}$, $x \equiv 163 \pmod{256}$, or $x \equiv 227 \pmod{256}$.

Next, consider the equation modulo $257$ (a prime number).
\begin{itemize}
    \item If $x \equiv 35 \pmod{256}$, then $5^x \equiv 14 \pmod{257}$. The original equation gives $2^y = 5^x+3 \equiv 17 \pmod{257}$.
    \item If $x \equiv 99 \pmod{256}$, then $5^x \equiv 224 \pmod{257}$. The original equation gives $2^y = 5^x+3 \equiv 227 \pmod{257}$.
    \item If $x \equiv 163 \pmod{256}$, then $5^x \equiv 243 \pmod{257}$. The original equation gives $2^y = 5^x+3 \equiv 246 \pmod{257}$.
    \item If $x \equiv 227 \pmod{256}$, then $5^x \equiv 33 \pmod{257}$. The original equation gives $2^y = 5^x+3 \equiv 36 \pmod{257}$.
\end{itemize}
However, by computing the powers of $2$ modulo $257$, it can be verified that none of $17, 227, 246, 36$ are in the set $\{2^k \pmod{257} | k \in \mathbb{N}^*\}$. This is a contradiction.
Therefore, the assumption $y \ge 8$ is false. By enumerating $y \in \{1, 2, \dots, 7\}$, we can verify that there are no other solutions.
\end{proof}
The following sections will detail how this method automatically searches for moduli similar to $257$.

\section{An Effective Algorithm for Type-I Transcendental Diophantine Equations}
\label{sec:methodology}

\subsection{Exclusion of Trivial Cases}

Before presenting the core algorithm, we first address all cases where the parameters $(a, b, c)$ are not pairwise coprime. As we will demonstrate, these scenarios can be resolved by elementary divisibility arguments, either proving the absence of solutions or reducing the problem to a finite search. This analysis exhaustively covers all trivial cases, isolating the non-trivial problem for the main algorithm.

We consider the implications of a common divisor for each pair of parameters.

\begin{itemize}
    \item \textbf{Case 1: A common divisor of $a$ and $b$.}
    Let $\gcd(a, b) = d > 1$. The equation $a^x + b = c^y$ implies $d \mid c^y$. Consequently, every prime factor of $d$ must also be a prime factor of $c$. If there exists a prime $p$ such that $p \mid d$ and $p \nmid c$, the equation has no solution.

    \item \textbf{Case 2: A common divisor of $b$ and $c$.}
    Let $\gcd(b, c) = d > 1$. Rearranging the equation to $a^x = c^y - b$ implies $d \mid a^x$. Symmetrically, every prime factor of $d$ must also be a prime factor of $a$. If this condition is not met, no solution exists.

    \item \textbf{Case 3: A common divisor of $a$ and $c$.}
    Let $\gcd(a, c) = d > 1$. From $b = c^y - a^x$, it follows that $d$ must divide $b$. If $d \nmid b$, the equation has no solutions for $x, y \ge 1$.
    If $d \mid b$, a solution may exist. Suppose there is a solution $(x,y)$ with $\min(x, y) > \log_d(b)$. Then $d^{\min(x,y)} > d^{\log_d(b)} = b$. Since $d \mid a$ and $d \mid c$, we have $d^{\min(x,y)} \mid a^x$ and $d^{\min(x,y)} \mid c^y$. This implies $d^{\min(x,y)} \mid (c^y - a^x)$, and thus $d^{\min(x,y)} \mid b$. This leads to a contradiction with $d^{\min(x,y)} > b$. Therefore, any potential solution must satisfy $\min(x, y) \le \lfloor\log_d(b)\rfloor$. This inequality provides a finite upper bound for at least one variable, reducing the problem to a finite enumeration of cases.
\end{itemize}

The analysis above systematically covers all situations where at least one of the pairs $(a,b)$, $(b,c)$, or $(a,c)$ is not coprime. Any equation not resolved by these elementary methods must therefore feature parameters $a, b, c$ that are \textbf{pairwise coprime}. These non-trivial instances form the central challenge and are the subject of the core algorithm presented in the subsequent section.

\subsection{Core Algorithm}
Assuming that $a,b,c$ are pairwise coprime, we propose an exclusion algorithm based on modular arithmetic. The algorithm revolves around a proof by contradiction: it assumes that a solution exists that is larger than all known solutions (e.g., $y \ge y_0$). The goal of the algorithm is to prove, through constructive methods, that this assumption leads to a contradiction.

\begin{enumerate}
    \item \textbf{Initial Search and Bounding}:
    Given a heuristic upper bound function $S(a, b, c)$, a preliminary search is conducted within the range $c^y \le S(a, b, c)$. If no solution is found, the starting checkpoint is set to $y_0=1$; if the largest solution found is $y_{\max}$, then $y_0=y_{\max}+1$. This $y_0$ serves as the lower bound for solutions we subsequently attempt to exclude.

    \item \textbf{Priority Queue of Moduli}:
    A priority queue of tuples $(p, k)$ is constructed, where $p$ is a prime and $k$ is a positive integer. The queue is ordered by the size of the modulus $M=p^k$ in ascending order. Initially, for all prime factors $p_i$ of $c$, the tuple $(p_i, y_0)$ is added to the queue.

    \item \textbf{Iterative Exclusion Process}:
    The algorithm iteratively extracts the highest-priority modulus $(p, k)$ from the queue, sets $M=p^k$, and attempts the following exclusion step to prove that the equation has no solution for $y \ge k$.
    Modulo $M$, if $y \ge k$, then $c^y \equiv 0 \pmod M$. The equation becomes $a^x \equiv -b \pmod M$. Let $R \equiv -b \pmod M$. We then check the solvability of the equation $a^x \equiv R \pmod M$.
    \begin{itemize}
        \item \textbf{Case A (Direct Exclusion)}: By computing the cyclic subgroup generated by $a$ in $(\mathbb{Z}/M\mathbb{Z})^*$, if it is found that $R$ is not in this subgroup, then the equation $a^x \equiv R \pmod M$ has no solution. This means a contradiction arises from the assumption $y \ge k$. The algorithm terminates successfully, and the proof is complete.
        \item \textbf{Case B (Conditional Constraint)}: If $a^x \equiv R \pmod M$ is solvable, a congruence constraint on $x$ is obtained, of the form $x \equiv x_r \pmod K$. In this situation, a magic prime must be introduced for further exclusion.
    \end{itemize}
    If the current modulus $(p,k)$ fails to achieve exclusion, the tuple $(p, k+1)$ is added to the queue, and the iteration continues.

    \item \textbf{Searching for a Magic Prime}:
    This is the core of Case B. We attempt to find a "magic prime" $P$ that leads to a contradiction modulo $P$. According to Dirichlet's theorem on arithmetic progressions, there are infinitely many primes in the series $\{nK+1\}_{n=1}^{\infty}$. We search for $P$ within this series.
    For each candidate prime $P$, we construct two sets:
    \begin{itemize}
        \item $S_1 = \{ (a^x+b) \pmod P \mid x \equiv x_r \pmod K \}$
        \item $S_2 = \{ c^y \pmod P \mid y \in \mathbb{N}^* \}$
    \end{itemize}
    If a prime $P$ exists such that $S_1 \cap S_2 = \emptyset$, then we have found an effective magic prime. This implies that no solution satisfying $x \equiv x_r \pmod K$ exists, thereby completing the exclusion. The algorithm terminates successfully.
    
    \item \textbf{Exploration vs. Exploitation Trade-off}:
    In the process of searching for a magic prime $P$, a termination function $T(a,b,c,p,k,n)$ is needed to decide when to stop exploring for the current $(p,k)$ and move to the next modulus in the queue. This represents a trade-off between "exploiting" the current constraint and "exploring" other constraints.
\end{enumerate}

\subsection{Formal Conjecture}
The algorithm's reliance on heuristic choices and deep number-theoretic properties means its termination for all inputs is not self-evident. However, based on its design and empirical success, we formalize the expectation of its universal power as follows:
\begin{conjecture}
There exist a universal heuristic search bound function $S(a,b,c)$ and a universal exploration termination function $T(a,b,c,p,k,n)$. These functions are such that for any set of pairwise coprime positive integers $(a,b,c)$ with $a,c \ge 2$, our algorithm---guided by this specific pair of functions $(S,T)$---is guaranteed to terminate in a finite number of steps, providing the complete set of solutions for equation \eqref{eq:main} and a rigorous proof of this completeness.
\end{conjecture}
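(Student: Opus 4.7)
The plan is to decompose the conjecture into two independent sub-problems corresponding to the two heuristic inputs $S(a,b,c)$ and $T(a,b,c,p,k,n)$. First, I would invoke an effective version of Baker's theorem on linear forms in logarithms, applied to the expression $y\log c - x\log a$, to produce an explicit, unconditional function $S_0(a,b,c)$ such that every solution $(x,y)$ of $a^x+b=c^y$ satisfies $c^y \le S_0(a,b,c)$. Taking $S := S_0$ then guarantees that after the initial search phase the threshold $y_0$ strictly exceeds the largest true solution; from that point onward the iterative phase is attempting to certify non-existence in a range that truly contains no solutions, so in principle every modular obstruction it discovers must be genuine.

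For the iterative exclusion phase I would attempt a two-pronged analysis. In Case A, I would argue via Hensel lifting and a lifting-the-exponent count that as $k$ grows the cyclic subgroup $\langle a\rangle \subseteq (\mathbb{Z}/p^k\mathbb{Z})^*$ occupies a shrinking density of residue classes unless $a$ is itself a primitive root to all high powers of $p$; in the generic situation this forces $-b \pmod{p^k}$ to eventually fall outside $\langle a\rangle$, giving direct exclusion. In Case B, the relevant observation is that $\{a^x \pmod P : x\equiv x_r \pmod K\}$ is a coset of $\langle a^K\rangle$ and $\{c^y \pmod P\}$ is the cyclic subgroup $\langle c\rangle$. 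Restricting to primes $P \equiv 1 \pmod K$ (abundant by Dirichlet), I would try to show, via Chebotarev density for the splitting field of $x^K - a$ over $\mathbb{Q}(\zeta_K)$ together with a translation by $b$, that for a positive-density set of such $P$ the shifted coset $a^{x_r}\langle a^K\rangle + b$ misses $\langle c\rangle$ modulo $P$. A positive-density conclusion lets me define $T$ as a polynomial function of the expected waiting time, controlling exploration depth.

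The hard part, and the step that in my estimation cannot be closed by current techniques, is the unconditional lower bound on the density of effective magic primes for every pairwise coprime triple $(a,b,c)$. Showing that such a prime exists whenever Case A has failed forces one to control the joint distribution of the multiplicative orders of $a$ and $c$ modulo primes in a fixed arithmetic progression, a question lying in the same family as Artin's primitive root conjecture and its quantitative refinements, which are known only under GRH. An additional subtlety is that Case A must also be shown to fail only in ``structured'' situations that Case B can handle; ruling out a pathological input where \emph{both} cases fail at every $(p,k)$ is essentially equivalent to an effective ABC-style inequality with explicit constants.

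A realistic intermediate theorem I would aim for is the conditional statement: \emph{assuming GRH for the Dedekind zeta functions of the relevant Kummer extensions, together with an effective form of the ABC conjecture, there exist computable $S$ and $T$ for which the algorithm terminates on every pairwise coprime input $(a,b,c)$ with $a,c\ge 2$.} The unconditional statement as phrased in the conjecture I would leave open, noting that even exhibiting $(S,T)$ in closed form, without the universality claim, would already be a nontrivial contribution justified by the experimental evidence.
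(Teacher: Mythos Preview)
The statement you are addressing is explicitly labeled a \emph{conjecture} in the paper, and the paper offers no proof of it whatsoever. The authors present it as a formalization of their empirical expectation, prefaced by the remark that ``its termination for all inputs is not self-evident.'' There is therefore no paper proof to compare your proposal against.

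Your outline is a reasonable survey of the obstructions one would face, and you are candid that the argument does not close: you identify the joint-order distribution problem as Artin-type and GRH-dependent, and you downgrade the target to a conditional statement under GRH plus effective ABC. That assessment is sound, but it means your proposal is not a proof of the conjecture either; it is a research plan culminating in a weaker, conditional theorem. Two specific points where even the conditional version would need more work: first, Baker's theorem on $y\log c - x\log a$ bounds solutions to $|a^x - c^y|$ small relative to $c^y$, but here $b$ is fixed, so the bound you get is already classical and does supply a valid $S_0$; however, this alone does not help the algorithm terminate, since the algorithm must \emph{find} a modular certificate, not merely know that no large solution exists. Second, your Case~A density heuristic (that $-b$ eventually leaves $\langle a\rangle$ in $(\mathbb{Z}/p^k\mathbb{Z})^*$) is false in exactly the situations the algorithm encounters most often: whenever a genuine solution $(x_0,y_0)$ exists with $y_0$ large, $-b \equiv a^{x_0}$ modulo every $p^k$ with $k\le y_0\cdot v_p(c)$, so Case~A \emph{cannot} fire below that threshold, and above it the residue $-b$ remains in $\langle a\rangle$ whenever $a$ generates a sufficiently large subgroup. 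The real content of the conjecture is entirely in the magic-prime step, as you note.

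In short: the paper leaves this open, and your proposal correctly leaves it open too, so there is no discrepancy to flag beyond the framing. If you intend to submit this as a proof, it is not one; if you intend it as commentary on why the conjecture is hard, it is accurate and goes somewhat beyond what the paper itself says.
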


\section{Implementation and Computational Results}
\label{sec:conclusion}

\subsection{Implementation}

The complete algorithm has been implemented in the C language for maximum performance and is available as an open-source project~\cite{cai2025diophantine}. Noting the symmetry between the equations $a^x+b=c^y$ and $c^y-b=a^x$, our implementation treats $a$ and $c$ symmetrically. The elements of the priority queue are triplets $(\text{mode}, p, k)$, where $\text{mode}$ indicates whether the modulus is derived from a prime factor of $a$ or $c$. To ensure efficiency over a large parameter range (e.g., $a, b, c \le 250$), the exploration termination function $T(a, b, c, p, k, n)$ was empirically tuned through extensive pre-computation and parameter scanning.

A notable feature of this work is the program's ability to generate a semi-formal proof script in the interactive theorem prover \textbf{Lean} for each successfully solved instance. A pure formal proof would require an extensive number theory library to handle complex assertions, such as solving discrete logarithm problems. To balance rigor with practicality, we adopt a hybrid approach that outsources computationally intensive claims to a trusted, external verifier. This is achieved through a custom-defined `axiom` in Lean:

\begin{verbatim}
-- Claim Structure
structure VerifiedFact where
  prop : Prop
  proof : prop

axiom Claim (prop_to_claim : Prop)
  (verified_facts : List VerifiedFact)
  (revalidator : String)
  : prop_to_claim
\end{verbatim}

In this framework, \texttt{prop\_to\_claim} is the number-theoretic assertion to be proven (e.g., that $5^x \equiv 253 \pmod{256}$ implies $x \equiv 35 \pmod{64}$). The \texttt{verified\_facts} list contains the premises upon which this claim depends. The \texttt{revalidator} string is an identifier for an external Python script that computationally re-validates the claim. The Lean proof thus accepts the `Claim` as a trusted axiom, ensuring the logical structure remains sound while delegating the complex modular arithmetic to a fast, specialized engine.

This method allows the generation of proofs that are both human-readable and structurally rigorous. To concretely demonstrate the algorithm's behavior and the application of this `Claim` structure in its different branches, we provide several representative examples of the generated Lean proofs in the appendix.

\subsection{Results of Large-Scale Computation}
We conducted a large-scale computation for equations with parameters in the range $a, b, c \le 250$. A significant computational fact is:

\begin{fact}
$$ \max_{\substack{a, b, c\in \mathbb{N}^*\\2\le a\le 250\\ 1 \le b \le 250 \\ 2 \le c \le 250}}\#\{(x, y)\in\mathbb{N^*}^2|a^x+b=c^y\}=2 $$
\end{fact}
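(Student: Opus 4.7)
The plan is to split the equality into a lower bound and an upper bound. The lower bound $\max \ge 2$ is exhibited by the triple $(a,b,c)=(5,3,2)$, which lies in the admissible range and admits the two solutions $(x,y)=(1,3)$ and $(x,y)=(3,7)$ verified in the proposition of Section 1. The substance of the Fact is therefore the upper bound $\max \le 2$, which must hold uniformly for every triple in the range.

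To establish the upper bound I would enumerate every $(a,b,c)$ with $2\le a\le 250$, $1\le b\le 250$, $2\le c\le 250$ and produce for each a certificate that its solution set has cardinality at most $2$. The enumeration splits along the dichotomy of Section 2.1: when $(a,b,c)$ is not pairwise coprime, Cases 1--3 dispose of the triple by elementary divisibility, either ruling out solutions outright or reducing to a finite enumeration bounded by $\lfloor \log_d b \rfloor$ that is checked by brute force. When $(a,b,c)$ is pairwise coprime, I would invoke the core algorithm of Section 2.2: the preliminary search over $c^y \le S(a,b,c)$ returns the full list of small solutions together with a checkpoint $y_0$, and the priority-queue/magic-prime procedure then builds a modular certificate excluding every $y\ge y_0$. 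Packaged through the Lean \texttt{Claim} framework of Section 3.1, each successful execution is a standalone proof of completeness for its instance; the aggregate across all $\sim 1.5\times 10^7$ triples is the proof of the upper bound, and the maximum cardinality observed among the produced solution sets is asserted to be $2$.

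The main obstacle is that the universal termination of the core algorithm is only conjectural (Section 2.3), so the proof of the Fact is inherently a computer-assisted audit rather than a closed-form argument: one must fix the empirically tuned pair $(S,T)$ from Section 3.1, execute the algorithm on every triple in the range, and re-validate each generated \texttt{Claim} against its external revalidator. Two subsidiary concerns deserve care. First, $S(a,b,c)$ must be chosen wide enough that no solution slips between the initial-search cutoff and the checkpoint $y_0$ used in the exclusion step; this is addressed by padding $S$ safely above the largest second solution encountered across the scan. Second, one must verify that on no pairwise coprime triple in the range does the magic-prime search exhaust the budget controlled by $T$, which is the empirical content of the tuning reported in Section 3.1. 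Conditional on that empirical claim, the Fact follows.
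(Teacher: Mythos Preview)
Your proposal is correct and matches the paper's own treatment: the Fact is presented there not with a written proof but as the outcome of the large-scale computation described in Sections~2 and~3, i.e., exactly the exhaustive run of the elementary-divisibility filter plus the core modular-exclusion algorithm over all admissible triples, with the lower bound witnessed by one of the listed two-solution equations. Your caveats about the conditional nature of termination and the empirical tuning of $(S,T)$ also mirror the paper's own stance; the only difference is that the paper asserts the computation as already completed (hence ``Fact'') rather than conditional.
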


Furthermore, within the search range, there are only the following 10 equations for which the number of solutions is 2:
\begin{itemize}
    \item $\{(x, y)\in\mathbb{N^*}^2|2^x+1=3^y\}=\{(1, 1), (3, 2)\}$
    \item $\{(x, y)\in\mathbb{N^*}^2|2^x+4=6^y\}=\{(1, 1), (5, 2)\}$
    \item $\{(x, y)\in\mathbb{N^*}^2|2^x+89=91^y\}=\{(1, 1), (13, 2)\}$
    \item $\{(x, y)\in\mathbb{N^*}^2|3^x+5=2^y\}=\{(1, 3), (3, 5)\}$
    \item $\{(x, y)\in\mathbb{N^*}^2|3^x+10=13^y\}=\{(1, 1), (7, 3)\}$
    \item $\{(x, y)\in\mathbb{N^*}^2|3^x+13=2^y\}=\{(1, 4), (5, 8)\}$
    \item $\{(x, y)\in\mathbb{N^*}^2|3^x+13=4^y\}=\{(1, 2), (5, 4)\}$
    \item $\{(x, y)\in\mathbb{N^*}^2|3^x+13=16^y\}=\{(1, 1), (5, 2)\}$
    \item $\{(x, y)\in\mathbb{N^*}^2|5^x+3=2^y\}=\{(1, 3), (3, 7)\}$
    \item $\{(x, y)\in\mathbb{N^*}^2|6^x+9=15^y\}=\{(1, 1), (3, 2)\}$
\end{itemize}

Based on this large-scale computational evidence, we propose the following conjecture:

\begin{conjecture}
For any positive integers $a, b, c$ with $a,c \ge 2$, the number of solutions to the equation $a^x+b=c^y$ in the domain of positive integers does not exceed $2$. That is:
$$ \max_{\substack{a, b, c\in \mathbb{N}^*\\a, c\ge 2}}\#\{(x, y)\in\mathbb{N^*}^2|a^x+b=c^y\}=2 $$
\end{conjecture}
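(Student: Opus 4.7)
The plan is to separate the problem into a reduction step and a uniform bound step, mirroring the structure of the algorithm itself. First I would invoke Section 2.1 to eliminate every non-coprime triple: whenever $\gcd(a,b)$, $\gcd(b,c)$, or $\gcd(a,c)$ exceeds $1$, the cited divisibility arguments either kill all solutions outright or force $\min(x,y) \le \lfloor \log_d b \rfloor$, which (by pinning the smaller exponent and using strict monotonicity of $a^x$ and $c^y$) leaves at most one solution per admissible value of the bounded variable. A short case analysis on those finitely many values will show that the total cannot exceed $2$. This reduces the conjecture to the pairwise coprime case, which is the substantive content.

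For pairwise coprime $(a,b,c)$, I would argue by contradiction: suppose three distinct solutions $(x_1,y_1) < (x_2,y_2) < (x_3,y_3)$ exist (ordered by $y$). Taking differences gives
\[
    a^{x_1}\bigl(a^{x_2-x_1}-1\bigr) = c^{y_1}\bigl(c^{y_2-y_1}-1\bigr),
\]
and similarly for the pair $(1,3)$. Coprimality forces $c^{y_1} \mid a^{x_2-x_1}-1$ and $a^{x_1} \mid c^{y_2-y_1}-1$, so the multiplicative orders $\operatorname{ord}_{c^{y_1}}(a)$ and $\operatorname{ord}_{a^{x_1}}(c)$ divide $x_2-x_1$ and $y_2-y_1$ respectively. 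Combining both inequalities in both directions, the gaps $x_2-x_1$, $x_3-x_2$, $y_2-y_1$, $y_3-y_2$ must each be at least of order $\min(a^{x_1},c^{y_1})/\log\max(a,c)$, which grows exponentially in the size of the first solution. In parallel, Baker-type lower bounds for the linear form $|y\log c - x\log a|$ applied to $c^{y_3}-a^{x_3}=b$ give an upper bound of the form $\max(x_3,y_3) \le C(a,b,c)$ that is polynomial in $\log\max(a,b,c)$ times explicit Baker constants. A third solution would then have to sit in a window that is simultaneously forced to be exponentially large and polynomially small in the parameters, producing the desired contradiction once $\max(a,b,c)$ exceeds a computable threshold $N_0$.

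The main obstacle, and the reason this can only be a program rather than a proof, is exactly the gap between the two asymptotics: the Baker-style upper bounds are notoriously weak in their dependence on $a,b,c$, whereas the structural lower bounds derived from two hypothetical solutions depend on the \emph{smallest} solution, which could conceivably be $(x_1,y_1)=(1,1)$. Bridging this gap requires either (i) an effective form of the ABC conjecture with an explicit, small $K_\epsilon$, which would collapse the bound $C(a,b,c)$ into the computable range, or (ii) a genuinely new result about simultaneous Pillai-type equations $a^x - c^y = \pm b$ having at most two solutions, in the spirit of Bennett's work on $\lvert a^x - b^y \rvert = c$ but with an additive rather than purely multiplicative constant. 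Lacking either, the realistic fallback is to verify the conjecture for $\max(a,b,c) \le N_0$ by the algorithm of Section 2.2, and for $\max(a,b,c) > N_0$ to establish the weaker statement that at most three solutions exist, deferring the final reduction from three to two to future work.
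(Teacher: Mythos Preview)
The statement is labeled a \emph{conjecture} in the paper and is not proved there; the only support offered is the computational Fact~3.1 for $a,b,c \le 250$. So there is no paper proof to compare against, and your proposal --- which you yourself describe as ``a program rather than a proof'' --- is likewise not a proof.

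Two of your intermediate claims are also weaker than stated. In the non-coprime reduction, Section~2.1 only bounds $\min(x,y)$ by $\lfloor\log_d b\rfloor$, a quantity that grows with $b$; each fixed value of the bounded variable gives at most one solution, but that yields $O(\log b)$ solutions in total, not two, so your ``short case analysis'' is in fact the full conjecture restricted to non-coprime triples and reduces nothing. In the coprime step, the asserted lower bound $x_2-x_1 \gtrsim c^{y_1}/\log a$ requires $\operatorname{ord}_{c^{y_1}}(a)$ to be comparable to $\varphi(c^{y_1})$, which is not automatic without further input. More to the point, the equation $a^x+b=c^y$ rewrites as $c^y-a^x=b$, and the assertion that this has at most two positive-integer solutions for all $a,c\ge 2$ and $b\ge 1$ is precisely Bennett's theorem on Pillai's equation (Canad.\ J.\ Math.\ \textbf{53} (2001), 897--922). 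The ``genuinely new result about simultaneous Pillai-type equations'' you identify as the missing ingredient already exists in the literature in exactly the additive form required; the paper's conjecture appears to be a known theorem rather than open.
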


\bibliographystyle{unsrtnat}
\bibliography{references}

\appendix
\section{Examples of Algorithm Decision Cases}
\label{sec:appendix}

This appendix showcases the algorithm's workflow and proof logic through a series of concrete examples. Each case is drawn directly from the semi-formal Lean proofs generated by our program and illustrates a typical path the solver might take. The examples are organized into two major classes based on the algorithm's decision logic. The second, more complex class, which involves the core modular arithmetic engine, is further divided into two operational modes based on the direction of the proof by contradiction:

We refer to the first main operating mode as \textbf{"Forward Mode."} The name reflects its proof strategy: the algorithm assumes a solution exists above a known bound (e.g., $y \ge y_0$) and proceeds "forward" by analyzing the equation $a^x+b=c^y$ modulo a power of a prime factor of $c$. This yields a congruence condition on the variable $x$ (e.g., $x \equiv x_r \pmod K$). Subsequently, the algorithm searches for a "magic prime" within the related arithmetic progression, $\{nK+1\}$, to derive a contradiction.

Symmetrically, we define the \textbf{"Backward Mode."} This mode begins by assuming a lower bound for $x$ (e.g., $x \ge x_0$). It then works "backward" from this assumption, analyzing the equation modulo a power of a prime factor of $a$ to constrain the variable $y$. The subsequent logic for reaching a contradiction mirrors that of the Forward Mode.

\subsection{Class I: Decision by Elementary Divisibility}
\label{app:class1}
Equations in this class violate basic divisibility properties and can be decided without entering the core loop of the algorithm.

\subsubsection{Type i: $b, c$ have a common factor, but are coprime to $a$}
\paragraph{Example 1: $2^x+6=9^y$}
\begin{itemize}
    \item \textbf{Algorithm Output}:
    \begin{verbatim}
/-
(Class I, Type i)   2 ^ x + 6 = 9 ^ y
For positive integers x, y satisfying 2 ^ x + 6 = 9 ^ y,
this is impossible, because it implies that 2 ^ x = 0 (mod 3).
-/
theorem diophantine1_2_6_9 (x : Nat) (y : Nat) (h1 : x >= 1) (h2 : y >= 1) 
(h3 : 2 ^ x + 6 = 9 ^ y) :      
  False
  := by
  have h4 : x % 1 = 0 := by omega
  have h5 : y % 1 = 0 := by omega
  have h6 := Claim (9 ^ y % 3 = 0) [
    {prop := y % 1 = 0, proof := h5},
    {prop := y >= 1, proof := h2},
  ] "pow_mod_eq_zero"
  have h7 : 2 ^ x % 3 = 0 := by omega
  have h8 := Claim False [
    {prop := x % 1 = 0, proof := h4},
    {prop := x >= 1, proof := h1},
    {prop := 2 ^ x % 3 = 0, proof := h7},
  ] "observe_mod_cycle"
  exact h8
    \end{verbatim}
    \item \textbf{Conclusion}:
    \[ x, y \in \mathbb{N}^* \Rightarrow 2^x + 6 \neq 9^y \]
\end{itemize}

\paragraph{Example 2: $3^x+6=8^y$}
\begin{itemize}
\item \textbf{Algorithm Output}:
    \begin{verbatim}
/-
(Class I, Type i)   3 ^ x + 6 = 8 ^ y
For positive integers x, y satisfying 3 ^ x + 6 = 8 ^ y,
this is impossible, because it implies that 3 ^ x = 0 (mod 2).
-/
theorem diophantine1_3_6_8 (x : Nat) (y : Nat) (h1 : x >= 1) (h2 : y >= 1) 
(h3 : 3 ^ x + 6 = 8 ^ y) :      
  False
  := by
  have h4 : x % 1 = 0 := by omega
  have h5 : y % 1 = 0 := by omega
  have h6 := Claim (8 ^ y % 2 = 0) [
    {prop := y % 1 = 0, proof := h5},
    {prop := y >= 1, proof := h2},
  ] "pow_mod_eq_zero"
  have h7 : 3 ^ x % 2 = 0 := by omega
  have h8 := Claim False [
    {prop := x % 1 = 0, proof := h4},
    {prop := x >= 1, proof := h1},
    {prop := 3 ^ x % 2 = 0, proof := h7},
  ] "observe_mod_cycle"
  exact h8
    \end{verbatim}
    \item \textbf{Conclusion}:
    \[ x, y \in \mathbb{N}^* \Rightarrow 3^x + 6 \neq 8^y \]
\end{itemize}

\subsubsection{Type ii: $a, b$ have a common factor, but are coprime to $c$}
\paragraph{Example 1: $2^x+4=7^y$}
\begin{itemize}
\item \textbf{Algorithm Output}:
    \begin{verbatim}
/-
(Class I, Type ii)   2 ^ x + 4 = 7 ^ y
For positive integers x, y satisfying 2 ^ x + 4 = 7 ^ y,
this is impossible, because it implies that 7 ^ y = 0 (mod 2).
-/
theorem diophantine1_2_4_7 (x : Nat) (y : Nat) (h1 : x >= 1) (h2 : y >= 1) 
(h3 : 2 ^ x + 4 = 7 ^ y) :      
  False
  := by
  have h4 : x % 1 = 0 := by omega
  have h5 : y % 1 = 0 := by omega
  have h6 := Claim (2 ^ x % 2 = 0) [
    {prop := x % 1 = 0, proof := h4},
    {prop := x >= 1, proof := h1},
  ] "pow_mod_eq_zero"
  have h7 : 7 ^ y % 2 = 0 := by omega
  have h8 := Claim False [
    {prop := y % 1 = 0, proof := h5},
    {prop := y >= 1, proof := h2},
    {prop := 7 ^ y % 2 = 0, proof := h7},
  ] "observe_mod_cycle"
  exact h8
    \end{verbatim}
    \item \textbf{Conclusion}:
    \[ x, y \in \mathbb{N}^* \Rightarrow 2^x + 4 \neq 7^y \]
\end{itemize}

\paragraph{Example 2: $3^x+6=11^y$}
\begin{itemize}
\item \textbf{Algorithm Output}:
    \begin{verbatim}
/-
(Class I, Type ii)   3 ^ x + 6 = 11 ^ y
For positive integers x, y satisfying 3 ^ x + 6 = 11 ^ y,
this is impossible, because it implies that 11 ^ y = 0 (mod 3).
-/
theorem diophantine1_3_6_11 (x : Nat) (y : Nat) (h1 : x >= 1) (h2 : y >= 1) 
(h3 : 3 ^ x + 6 = 11 ^ y) :    
  False
  := by
  have h4 : x % 1 = 0 := by omega
  have h5 : y % 1 = 0 := by omega
  have h6 := Claim (3 ^ x % 3 = 0) [
    {prop := x % 1 = 0, proof := h4},
    {prop := x >= 1, proof := h1},
  ] "pow_mod_eq_zero"
  have h7 : 11 ^ y % 3 = 0 := by omega
  have h8 := Claim False [
    {prop := y % 1 = 0, proof := h5},
    {prop := y >= 1, proof := h2},
    {prop := 11 ^ y % 3 = 0, proof := h7},
  ] "observe_mod_cycle"
  exact h8
    \end{verbatim}
    \item \textbf{Conclusion}:
    \[ x, y \in \mathbb{N}^* \Rightarrow 3^x + 6 \neq 11^y \]
\end{itemize}

\subsubsection{Type iii: $a, c$ have a common factor}
\paragraph{Example 1: $2^x+4=6^y$}
\begin{itemize}
\item \textbf{Algorithm Output}:
    \begin{verbatim}
/-
(Class I, Type iii)   2 ^ x + 4 = 6 ^ y
For positive integers x, y satisfying 2 ^ x + 4 = 6 ^ y,
if x >= 3 and y >= 3,
4 = 0 (mod 8), which is impossible.
Therefore, x < 3 or y < 3.
Further examination shows that (x, y) = (1, 1), (5, 2).
-/
theorem diophantine1_2_4_6 (x : Nat) (y : Nat) (h1 : x >= 1) (h2 : y >= 1) 
(h3 : 2 ^ x + 4 = 6 ^ y) :      
  List.Mem (x, y) [(1, 1), (5, 2)]
  := by
  have h4 : x % 1 = 0 := by omega
  have h5 : y % 1 = 0 := by omega
  by_cases h6 : And (x >= 3) (y >= 3)
  have h7 := Claim (2 ^ x % 8 = 0) [
    {prop := x % 1 = 0, proof := h4},
    {prop := x >= 3, proof := h6.left},
  ] "pow_mod_eq_zero"
  have h8 := Claim (6 ^ y % 8 = 0) [
    {prop := y % 1 = 0, proof := h5},
    {prop := y >= 3, proof := h6.right},
  ] "pow_mod_eq_zero"
  omega
  have h7 : Or (x <= 2) (y <= 2) := by omega
  have h8 := Claim (List.Mem (x, y) [(1, 1), (5, 2)]) [
    {prop :=  x % 1 = 0, proof := h4},
    {prop :=  x >= 1, proof := h1},
    {prop :=  y % 1 = 0, proof := h5},
    {prop :=  y >= 1, proof := h2},
    {prop := 2 ^ x + 4 = 6 ^ y, proof := h3},
    {prop := Or (x <= 2) (y <= 2), proof := h7},
  ] "diophantine1_enumeration"
  exact h8
    \end{verbatim}
    \item \textbf{Conclusion}:
    \[ 2^x+4=6^y, x, y \in \mathbb{N}^* \Rightarrow (x,y)=(1,1) \text{ or } (x,y)=(5,2) \]
\end{itemize}

\paragraph{Example 2: $3^x+1=9^y$}
\begin{itemize}
\item \textbf{Algorithm Output}:
    \begin{verbatim}
/-
(Class I, Type iii)   3 ^ x + 1 = 9 ^ y
For positive integers x, y satisfying 3 ^ x + 1 = 9 ^ y,
if x >= 1 and y >= 1,
1 = 0 (mod 3), which is impossible.
Therefore, x < 1 or y < 1.
So 3 ^ x + 1 = 9 ^ y is impossible.
-/
theorem diophantine1_3_1_9 (x : Nat) (y : Nat) (h1 : x >= 1) (h2 : y >= 1) 
(h3 : 3 ^ x + 1 = 9 ^ y) :      
  False
  := by
  have h4 : x % 1 = 0 := by omega
  have h5 : y % 1 = 0 := by omega
  by_cases h6 : And (x >= 1) (y >= 1)
  have h7 := Claim (3 ^ x % 3 = 0) [
    {prop := x % 1 = 0, proof := h4},
    {prop := x >= 1, proof := h6.left},
  ] "pow_mod_eq_zero"
  have h8 := Claim (9 ^ y % 3 = 0) [
    {prop := y % 1 = 0, proof := h5},
    {prop := y >= 1, proof := h6.right},
  ] "pow_mod_eq_zero"
  omega
  have h7 : Or (x <= 0) (y <= 0) := by omega
  have h8 := Claim False [
    {prop :=  x % 1 = 0, proof := h4},
    {prop :=  x >= 1, proof := h1},
    {prop :=  y % 1 = 0, proof := h5},
    {prop :=  y >= 1, proof := h2},
    {prop := 3 ^ x + 1 = 9 ^ y, proof := h3},
    {prop := Or (x <= 0) (y <= 0), proof := h7},
  ] "diophantine1_enumeration"
  exact h8
    \end{verbatim}
    \item \textbf{Conclusion}:
    \[ x, y \in \mathbb{N}^* \Rightarrow 3^x + 1 \neq 9^y \]
\end{itemize}

\subsection{Class II: Decision by the Modular Arithmetic Exclusion Algorithm}
\label{app:class2}
Equations in this class require entering the core loop of the algorithm to constrain the solution space by selecting appropriate moduli.

\subsubsection{Type iv: Forward Mode, no magic prime}
\paragraph{Example 1: $7^x+3=10^y$}
\begin{itemize}
\item \textbf{Algorithm Output}:
    \begin{verbatim}
-- Trying to disprove y >= 2 with prime factor 2 of 10 ...
-- Trying to disprove y >= 3 with prime factor 2 of 10 ...
-- Succeeded.
/-
(Class II, Front Mode, no magic prime)   7 ^ x + 3 = 10 ^ y
For positive integers x, y satisfying 7 ^ x + 3 = 10 ^ y,
if y >= 3, 7 ^ x = 5 (mod 8).
However, this is impossible.
Therefore, y < 3.
Further examination shows that (x, y) = (1, 1).
-/
theorem diophantine1_7_3_10 (x : Nat) (y : Nat) (h1 : x >= 1) (h2 : y >= 1) 
(h3 : 7 ^ x + 3 = 10 ^ y) :
  List.Mem (x, y) [(1, 1)]
  := by
  have h4 : x % 1 = 0 := by omega
  have h5 : y % 1 = 0 := by omega
  by_cases h6 : y >= 3
  have h7 := Claim (10 ^ y % 8 = 0) [
    {prop := y % 1 = 0, proof := h5},
    {prop := y >= 3, proof := h6},
  ] "pow_mod_eq_zero"
  have h8 : 7 ^ x % 8 = 5 := by omega
  have h9 := Claim False [
    {prop := x % 1 = 0, proof := h4},
    {prop := x >= 1, proof := h1},
    {prop := 7 ^ x % 8 = 5, proof := h8},
  ] "observe_mod_cycle"
  apply False.elim h9
  have h7 : y <= 2 := by omega
  have h8 := Claim (List.Mem (x, y) [(1, 1)]) [
    {prop :=  x % 1 = 0, proof := h4},
    {prop :=  x >= 1, proof := h1},
    {prop :=  y % 1 = 0, proof := h5},
    {prop :=  y >= 1, proof := h2},
    {prop := 7 ^ x + 3 = 10 ^ y, proof := h3},
    {prop := y <= 2, proof := h7},
  ] "diophantine1_enumeration"
  exact h8
    \end{verbatim}
    \item \textbf{Conclusion}:
    \[ 7^x+3=10^y, x, y \in \mathbb{N}^* \Rightarrow (x,y)=(1,1) \]
\end{itemize}

\paragraph{Example 2: $17^x+3=20^y$}
\begin{itemize}
\item \textbf{Algorithm Output}:
    \begin{verbatim}
-- Trying to disprove y >= 2 with prime factor 2 of 20 ...
-- Trying to disprove y >= 3 with prime factor 2 of 20 ...
-- Succeeded.
/-
(Class II, Front Mode, no magic prime)   17 ^ x + 3 = 20 ^ y
For positive integers x, y satisfying 17 ^ x + 3 = 20 ^ y,
if y >= 3, 17 ^ x = 5 (mod 8).
However, this is impossible.
Therefore, y < 3.
Further examination shows that (x, y) = (1, 1).
-/
theorem diophantine1_17_3_20 (x : Nat) (y : Nat) (h1 : x >= 1) (h2 : y >= 1) 
(h3 : 17 ^ x + 3 = 20 ^ y) :  
  List.Mem (x, y) [(1, 1)]
  := by
  have h4 : x % 1 = 0 := by omega
  have h5 : y % 1 = 0 := by omega
  by_cases h6 : y >= 3
  have h7 := Claim (20 ^ y % 8 = 0) [
    {prop := y % 1 = 0, proof := h5},
    {prop := y >= 3, proof := h6},
  ] "pow_mod_eq_zero"
  have h8 : 17 ^ x % 8 = 5 := by omega
  have h9 := Claim False [
    {prop := x % 1 = 0, proof := h4},
    {prop := x >= 1, proof := h1},
    {prop := 17 ^ x % 8 = 5, proof := h8},
  ] "observe_mod_cycle"
  apply False.elim h9
  have h7 : y <= 2 := by omega
  have h8 := Claim (List.Mem (x, y) [(1, 1)]) [
    {prop :=  x % 1 = 0, proof := h4},
    {prop :=  x >= 1, proof := h1},
    {prop :=  y % 1 = 0, proof := h5},
    {prop :=  y >= 1, proof := h2},
    {prop := 17 ^ x + 3 = 20 ^ y, proof := h3},
    {prop := y <= 2, proof := h7},
  ] "diophantine1_enumeration"
  exact h8
    \end{verbatim}
    \item \textbf{Conclusion}:
    \[ 17^x+3=20^y, x, y \in \mathbb{N}^* \Rightarrow (x,y)=(1,1) \]
\end{itemize}

\subsubsection{Type v: Forward Mode, with magic prime}
\paragraph{Example 1: $2^x+1=3^y$}
\begin{itemize}
\item \textbf{Algorithm Output}:
    \begin{verbatim}
-- Trying to disprove x >= 4 with prime factor 2 of 2 ...
-- Trying to disprove y >= 3 with prime factor 3 of 3 ...
-- Trying prime 19...
-- Succeeded.
/-
(Class II, Front Mode, with magic prime 19)   2 ^ x + 1 = 3 ^ y
For positive integers x, y satisfying 2 ^ x + 1 = 3 ^ y,
if y >= 3, 2 ^ x = 26 (mod 27).
So x = 9 (mod 18).
Therefore, 2 ^ x = 18 (mod 19).
So 3 ^ y = 0 (mod 19), but this is impossible.
Therefore, y < 3.
Further examination shows that (x, y) = (1, 1), (3, 2).
-/
theorem diophantine1_2_1_3 (x : Nat) (y : Nat) (h1 : x >= 1) (h2 : y >= 1) 
(h3 : 2 ^ x + 1 = 3 ^ y) :      
  List.Mem (x, y) [(1, 1), (3, 2)]
  := by
  have h4 : x % 1 = 0 := by omega
  have h5 : y % 1 = 0 := by omega
  by_cases h6 : y >= 3
  have h7 := Claim (3 ^ y % 27 = 0) [
    {prop := y % 1 = 0, proof := h5},
    {prop := y >= 3, proof := h6},
  ] "pow_mod_eq_zero"
  have h8 : 2 ^ x % 27 = 26 := by omega
  have h9 := Claim (x % 18 = 9) [
    {prop := x % 1 = 0, proof := h4},
    {prop := x >= 1, proof := h1},
    {prop := 2 ^ x % 27 = 26, proof := h8},
  ] "observe_mod_cycle"
  have h10 := Claim (List.Mem (2 ^ x % 19) [18]) [
    {prop := x % 1 = 0, proof := h4},
    {prop := x >= 1, proof := h1},
    {prop := x % 18 = 9, proof := h9},
  ] "utilize_mod_cycle"
  have h11 := Claim (List.Mem (3 ^ y % 19) [0]) [
    {prop := List.Mem (2 ^ x % 19) [18], proof := h10},
    {prop := 2 ^ x + 1 = 3 ^ y, proof := h3},
  ] "compute_mod_add"
  have h12 := Claim False [
    {prop := y % 1 = 0, proof := h5},
    {prop := y >= 1, proof := h2},
    {prop := List.Mem (3 ^ y % 19) [0], proof := h11},
  ] "exhaust_mod_cycle"
  apply False.elim h12
  have h7 : y <= 2 := by omega
  have h8 := Claim (List.Mem (x, y) [(1, 1), (3, 2)]) [
    {prop :=  x % 1 = 0, proof := h4},
    {prop :=  x >= 1, proof := h1},
    {prop :=  y % 1 = 0, proof := h5},
    {prop :=  y >= 1, proof := h2},
    {prop := 2 ^ x + 1 = 3 ^ y, proof := h3},
    {prop := y <= 2, proof := h7},
  ] "diophantine1_enumeration"
  exact h8
    \end{verbatim}
    \item \textbf{Conclusion}:
    \[ 2^x+1=3^y, x, y \in \mathbb{N}^* \Rightarrow (x,y)=(1,1) \text{ or } (x,y)=(3,2) \]
\end{itemize}

\paragraph{Example 2: $2^x+89=91^y$}
\begin{itemize}
\item \textbf{Algorithm Output}:
    \begin{verbatim}
-- Trying to disprove y >= 3 with prime factor 7 of 91 ...
-- Trying prime 883...
-- Trying prime 1471...
-- Trying prime 2647...
-- Succeeded.
/-
(Class II, Front Mode, with magic prime 2647)   2 ^ x + 89 = 91 ^ y
For positive integers x, y satisfying 2 ^ x + 89 = 91 ^ y,
if y >= 3, 2 ^ x = 254 (mod 343).
So x = 76 (mod 147), 
which implies x = 76, 223, 370, 517, 664, 811, 958, 1105, 1252 (mod 1323).
Therefore, 2 ^ x = 1994, 852, 1811, 957, 1447, 1513, 2343, 348, 1970 (mod 2647).
So 91 ^ y = 2083, 941, 1900, 1046, 1536, 1602, 2432, 437, 2059 (mod 2647), 
but this is impossible.
Therefore, y < 3.
Further examination shows that (x, y) = (1, 1), (13, 2).
-/
theorem diophantine1_2_89_91 (x : Nat) (y : Nat) (h1 : x >= 1) (h2 : y >= 1) 
(h3 : 2 ^ x + 89 = 91 ^ y) :  
  List.Mem (x, y) [(1, 1), (13, 2)]
  := by
  have h4 : x % 1 = 0 := by omega
  have h5 : y % 1 = 0 := by omega
  by_cases h6 : y >= 3
  have h7 := Claim (91 ^ y % 343 = 0) [
    {prop := y % 1 = 0, proof := h5},
    {prop := y >= 3, proof := h6},
  ] "pow_mod_eq_zero"
  have h8 : 2 ^ x % 343 = 254 := by omega
  have h9 := Claim (x % 147 = 76) [
    {prop := x % 1 = 0, proof := h4},
    {prop := x >= 1, proof := h1},
    {prop := 2 ^ x % 343 = 254, proof := h8},
  ] "observe_mod_cycle"
  have h10 := Claim (List.Mem (2 ^ x % 2647) 
  [1994, 852, 1811, 957, 1447, 1513, 2343, 348, 1970]) [        
    {prop := x % 1 = 0, proof := h4},
    {prop := x >= 1, proof := h1},
    {prop := x % 147 = 76, proof := h9},
  ] "utilize_mod_cycle"
  have h11 := Claim (List.Mem (91 ^ y % 2647) 
  [2083, 941, 1900, 1046, 1536, 1602, 2432, 437, 2059]) [      
    {prop := List.Mem (2 ^ x % 2647) [1994, 852, 1811, 957, 1447, 1513, 2343, 348, 1970], 
    proof := h10},   
    {prop := 2 ^ x + 89 = 91 ^ y, proof := h3},
  ] "compute_mod_add"
  have h12 := Claim False [
    {prop := y % 1 = 0, proof := h5},
    {prop := y >= 1, proof := h2},
    {prop := List.Mem (91 ^ y % 2647) [2083, 941, 1900, 1046, 1536, 1602, 2432, 437, 2059], 
    proof := h11},
  ] "exhaust_mod_cycle"
  apply False.elim h12
  have h7 : y <= 2 := by omega
  have h8 := Claim (List.Mem (x, y) [(1, 1), (13, 2)]) [
    {prop :=  x % 1 = 0, proof := h4},
    {prop :=  x >= 1, proof := h1},
    {prop :=  y % 1 = 0, proof := h5},
    {prop :=  y >= 1, proof := h2},
    {prop := 2 ^ x + 89 = 91 ^ y, proof := h3},
    {prop := y <= 2, proof := h7},
  ] "diophantine1_enumeration"
  exact h8
    \end{verbatim}
    \item \textbf{Conclusion}:
    \[ 2^x+89=91^y, x, y \in \mathbb{N}^* \Rightarrow (x,y)=(1,1) \text{ or } (x,y)=(13,2) \]
\end{itemize}

\subsubsection{Type vi: Backward Mode, no magic prime}
\paragraph{Example 1: $2^x+5=11^y$}
\begin{itemize}
\item \textbf{Algorithm Output}:
    \begin{verbatim}
-- Trying to disprove x >= 1 with prime factor 2 of 2 ...
-- Trying to disprove x >= 2 with prime factor 2 of 2 ...
-- Trying to disprove x >= 3 with prime factor 2 of 2 ...
-- Succeeded.
/-
(Class II, Back Mode, no magic prime)   2 ^ x + 5 = 11 ^ y
For positive integers x, y satisfying 2 ^ x + 5 = 11 ^ y,
if x >= 3, 11 ^ y = 5 (mod 8).
However, this is impossible.
Therefore, x < 3.
Further examination shows that 2 ^ x + 5 = 11 ^ y is impossible.
-/
theorem diophantine1_2_5_11 (x : Nat) (y : Nat) (h1 : x >= 1) (h2 : y >= 1) 
(h3 : 2 ^ x + 5 = 11 ^ y) :    
  False
  := by
  have h4 : x % 1 = 0 := by omega
  have h5 : y % 1 = 0 := by omega
  by_cases h6 : x >= 3
  have h7 := Claim (2 ^ x % 8 = 0) [
    {prop := x % 1 = 0, proof := h4},
    {prop := x >= 3, proof := h6},
  ] "pow_mod_eq_zero"
  have h8 : 11 ^ y % 8 = 5 := by omega
  have h9 := Claim False [
    {prop := y % 1 = 0, proof := h5},
    {prop := y >= 1, proof := h2},
    {prop := 11 ^ y % 8 = 5, proof := h8},
  ] "observe_mod_cycle"
  apply False.elim h9
  have h7 : x <= 2 := by omega
  have h8 := Claim False [
    {prop :=  x % 1 = 0, proof := h4},
    {prop :=  x >= 1, proof := h1},
    {prop :=  y % 1 = 0, proof := h5},
    {prop :=  y >= 1, proof := h2},
    {prop := 2 ^ x + 5 = 11 ^ y, proof := h3},
    {prop := x <= 2, proof := h7},
  ] "diophantine1_enumeration"
  exact h8
    \end{verbatim}
    \item \textbf{Conclusion}:
    \[ x, y \in \mathbb{N}^* \Rightarrow 2^x+5 \neq 11^y \]
\end{itemize}

\paragraph{Example 2: $3^x+5=7^y$}
\begin{itemize}
\item \textbf{Algorithm Output}:
    \begin{verbatim}
-- Trying to disprove x >= 1 with prime factor 3 of 3 ...
-- Succeeded.
/-
(Class II, Back Mode, no magic prime)   3 ^ x + 5 = 7 ^ y
For positive integers x, y satisfying 3 ^ x + 5 = 7 ^ y,
if x >= 1, 7 ^ y = 2 (mod 3).
However, this is impossible.
Therefore, x < 1.
So 3 ^ x + 5 = 7 ^ y is impossible.
-/
theorem diophantine1_3_5_7 (x : Nat) (y : Nat) (h1 : x >= 1) (h2 : y >= 1) 
(h3 : 3 ^ x + 5 = 7 ^ y) :      
  False
  := by
  have h4 : x % 1 = 0 := by omega
  have h5 : y % 1 = 0 := by omega
  by_cases h6 : x >= 1
  have h7 := Claim (3 ^ x % 3 = 0) [
    {prop := x % 1 = 0, proof := h4},
    {prop := x >= 1, proof := h6},
  ] "pow_mod_eq_zero"
  have h8 : 7 ^ y % 3 = 2 := by omega
  have h9 := Claim False [
    {prop := y % 1 = 0, proof := h5},
    {prop := y >= 1, proof := h2},
    {prop := 7 ^ y % 3 = 2, proof := h8},
  ] "observe_mod_cycle"
  apply False.elim h9
  have h7 : x <= 0 := by omega
  have h8 := Claim False [
    {prop :=  x % 1 = 0, proof := h4},
    {prop :=  x >= 1, proof := h1},
    {prop :=  y % 1 = 0, proof := h5},
    {prop :=  y >= 1, proof := h2},
    {prop := 3 ^ x + 5 = 7 ^ y, proof := h3},
    {prop := x <= 0, proof := h7},
  ] "diophantine1_enumeration"
  exact h8
    \end{verbatim}
    \item \textbf{Conclusion}:
    \[ x, y \in \mathbb{N}^* \Rightarrow 3^x+5 \neq 7^y \]
\end{itemize}

\subsubsection{Type vii: Backward Mode, with magic prime}
\paragraph{Example 1: $3^x+7=2^y$}
\begin{itemize}
\item \textbf{Algorithm Output}:
    \begin{verbatim}
-- Trying to disprove x >= 3 with prime factor 3 of 3 ...
-- Trying prime 19...
-- Trying prime 37...
-- Trying prime 73...
-- Succeeded.
/-
(Class II, Back Mode, with magic prime 73)   3 ^ x + 7 = 2 ^ y
For positive integers x, y satisfying 3 ^ x + 7 = 2 ^ y,
if x >= 3, 2 ^ y = 7 (mod 27).
So y = 16 (mod 18),
which implies y = 7 (mod 9).
Therefore, 2 ^ y = 55 (mod 73).
So 3 ^ x = 48 (mod 73), but this is impossible.
Therefore, x < 3.
Further examination shows that (x, y) = (2, 4).
-/
theorem diophantine1_3_7_2 (x : Nat) (y : Nat) (h1 : x >= 1) (h2 : y >= 1) 
(h3 : 3 ^ x + 7 = 2 ^ y) :      
  List.Mem (x, y) [(2, 4)]
  := by
  have h4 : x % 1 = 0 := by omega
  have h5 : y % 1 = 0 := by omega
  by_cases h6 : x >= 3
  have h7 := Claim (3 ^ x % 27 = 0) [
    {prop := x % 1 = 0, proof := h4},
    {prop := x >= 3, proof := h6},
  ] "pow_mod_eq_zero"
  have h8 : 2 ^ y % 27 = 7 := by omega
  have h9 := Claim (y % 18 = 16) [
    {prop := y % 1 = 0, proof := h5},
    {prop := y >= 1, proof := h2},
    {prop := 2 ^ y % 27 = 7, proof := h8},
  ] "observe_mod_cycle"
  have h10 := Claim (List.Mem (2 ^ y % 73) [55]) [
    {prop := y % 1 = 0, proof := h5},
    {prop := y >= 1, proof := h2},
    {prop := y % 18 = 16, proof := h9},
  ] "utilize_mod_cycle"
  have h11 := Claim (List.Mem (3 ^ x % 73) [48]) [
    {prop := List.Mem (2 ^ y % 73) [55], proof := h10},
    {prop := 3 ^ x + 7 = 2 ^ y, proof := h3},
  ] "compute_mod_sub"
  have h12 := Claim False [
    {prop := x % 1 = 0, proof := h4},
    {prop := x >= 1, proof := h1},
    {prop := List.Mem (3 ^ x % 73) [48], proof := h11},
  ] "exhaust_mod_cycle"
  apply False.elim h12
  have h7 : x <= 2 := by omega
  have h8 := Claim (List.Mem (x, y) [(2, 4)]) [
    {prop :=  x % 1 = 0, proof := h4},
    {prop :=  x >= 1, proof := h1},
    {prop :=  y % 1 = 0, proof := h5},
    {prop :=  y >= 1, proof := h2},
    {prop := 3 ^ x + 7 = 2 ^ y, proof := h3},
    {prop := x <= 2, proof := h7},
  ] "diophantine1_enumeration"
  exact h8
    \end{verbatim}
    \item \textbf{Conclusion}:
    \[ 3^x+7=2^y, x, y \in \mathbb{N}^* \Rightarrow (x,y)=(2,4) \]
\end{itemize}

\paragraph{Example 2: $3^x+10=13^y$}
\begin{itemize}
\item \textbf{Algorithm Output}:
    \begin{verbatim}
-- Trying to disprove x >= 8 with prime factor 3 of 3 ...
-- Trying prime 17497...
-- Succeeded.
/-
(Class II, Back Mode, with magic prime 17497)   3 ^ x + 10 = 13 ^ y
For positive integers x, y satisfying 3 ^ x + 10 = 13 ^ y,
if x >= 8, 13 ^ y = 10 (mod 6561).
So y = 1461 (mod 2187),
which implies y = 1461, 3648, 5835, 8022 (mod 8748).
Therefore, 13 ^ y = 11616, 6486, 5881, 11011 (mod 17497).
So 3 ^ x = 11606, 6476, 5871, 11001 (mod 17497), but this is impossible.
Therefore, x < 8.
Further examination shows that (x, y) = (1, 1), (7, 3).
-/
theorem diophantine1_3_10_13 (x : Nat) (y : Nat) (h1 : x >= 1) (h2 : y >= 1) 
(h3 : 3 ^ x + 10 = 13 ^ y) :  
  List.Mem (x, y) [(1, 1), (7, 3)]
  := by
  have h4 : x % 1 = 0 := by omega
  have h5 : y % 1 = 0 := by omega
  by_cases h6 : x >= 8
  have h7 := Claim (3 ^ x % 6561 = 0) [
    {prop := x % 1 = 0, proof := h4},
    {prop := x >= 8, proof := h6},
  ] "pow_mod_eq_zero"
  have h8 : 13 ^ y % 6561 = 10 := by omega
  have h9 := Claim (y % 2187 = 1461) [
    {prop := y % 1 = 0, proof := h5},
    {prop := y >= 1, proof := h2},
    {prop := 13 ^ y % 6561 = 10, proof := h8},
  ] "observe_mod_cycle"
  have h10 := Claim (List.Mem (13 ^ y % 17497) [11616, 6486, 5881, 11011]) [
    {prop := y % 1 = 0, proof := h5},
    {prop := y >= 1, proof := h2},
    {prop := y % 2187 = 1461, proof := h9},
  ] "utilize_mod_cycle"
  have h11 := Claim (List.Mem (3 ^ x % 17497) [11606, 6476, 5871, 11001]) [
    {prop := List.Mem (13 ^ y % 17497) [11616, 6486, 5881, 11011], proof := h10},
    {prop := 3 ^ x + 10 = 13 ^ y, proof := h3},
  ] "compute_mod_sub"
  have h12 := Claim False [
    {prop := x % 1 = 0, proof := h4},
    {prop := x >= 1, proof := h1},
    {prop := List.Mem (3 ^ x % 17497) [11606, 6476, 5871, 11001], proof := h11},
  ] "exhaust_mod_cycle"
  apply False.elim h12
  have h7 : x <= 7 := by omega
  have h8 := Claim (List.Mem (x, y) [(1, 1), (7, 3)]) [
    {prop :=  x % 1 = 0, proof := h4},
    {prop :=  x >= 1, proof := h1},
    {prop :=  y % 1 = 0, proof := h5},
    {prop :=  y >= 1, proof := h2},
    {prop := 3 ^ x + 10 = 13 ^ y, proof := h3},
    {prop := x <= 7, proof := h7},
  ] "diophantine1_enumeration"
  exact h8
    \end{verbatim}
    \item \textbf{Conclusion}:
    \[ 3^x+10=13^y, x, y \in \mathbb{N}^* \Rightarrow (x,y)=(1,1) \text{ or } (x,y)=(7,3) \]
\end{itemize}

\end{document}